\newtheorem{theorem}{Theorem}[section]
\newtheorem{corollary}[theorem]{Corollary}
\newtheorem{proposition}[theorem]{Proposition}
\theoremstyle{definition}
\newtheorem{definition}[theorem]{Definition}
\newtheorem{example}[theorem]{Example}
\newtheorem{remark}[theorem]{Remark}
\DeclareMathOperator{\sech}{sech}
\def\e{\mathbb E}
\def\r{\mathbb R}
\def\l{\mathbb L}
\def\d{\mathsf d}
\def\d{\mathbb D}
\begin{document}

\title[Constant curvature curves]{Constant curvature curves in dual affine and dual Lorentz-Minkowski planes}
 
\author{Muhittin Evren Aydin}
\address{Department of Mathematics, Faculty of Science, Firat University, Elazig,  23200 Turkey}
\email{meaydin@firat.edu.tr}
\author{Nursem\.in \c{C}avdar}
\address{Department of Mathematics, Faculty of Science and Art, Tekirdag Namik Kemal University, 59030, Tekirdag, Turkey}
\email{nursemin\_cavdar@hotmail.com}
\author{Mahmut Erg\"ut}
\address{Department of Mathematics, Faculty of Science and Art, Tekirdag Namik Kemal University, 59030, Tekirdag, Turkey}
\email{mergut@nku.edu.tr}
\subjclass{53A04; 53A15; 53A17; 53B30}
\keywords{dual plane, Lorentz-Minkowski plane, equiaffine curvature}
\begin{abstract}
In this paper, we first study invariants of curves parametrized by a real variable in the dual plane $\mathbb{D}^2$ under equiaffine transformations. We then obtain explicit equations for all curves in $\mathbb{D}^2$ whose equiaffine curvature is a dual constant. In particular, we prove that when the equiaffine curvature is a pure real constant, both the real and dual parts of the curve in $\mathbb{D}^2$ are quadratic curves. In addition, we provide a complete classification of spacelike and timelike curves parametrized by a real variable in the dual Lorentz--Minkowski plane $\mathbb{D}^2_1$ whose curvature is a dual constant.
\end{abstract}

\maketitle
\section{Introduction}

A classical problem in differential geometry concerns the study of curves with constant curvature in ambient spaces. The simplest setting for this problem is the plane, since in the Euclidean plane $\e^2$ or Lorentzian plane $\l^2$, any curve is, up to isometries, uniquely determined by its curvature. In this paper, we investigate this problem in the dual affine plane $\d^2$ and dual Lorentzian plane $\d_1^2$.

The study of dual numbers was initiated by William Kingdon Clifford (1845-1879); see \cite{clif}. Recall that a dual number is of the form $x+\varepsilon y$, where $x,y\in \r$ are called its real and dual parts respectively and $\varepsilon^2=0$ with $\varepsilon\neq0$. The set $\d$ of all dual numbers is a commutative ring with unity. The arithmetic operations on $\d$ are known, see for example \cite{vel}. An $n$-dimensional dual space $\d^n$ is a module over the ring $\d$; in fact $\d^n=\r^n \times \varepsilon \r^n$.

Let $l$ be a straight line in $\e^3$ passing through a point $p\in \e^3$ and parallel to a vector $\vec{x}\in \e^3$ with $|\vec{x}|=1$. Eduard Study (1862-1930) \cite{stud} established a correspondence between lines in $\e^3$ and dual vectors in $\d^3$, given by $l \mapsto \widehat{x}=\vec{x}+\varepsilon (p \times \vec{x})$, where the dual part represents the moment vector of the line $l$. Since the real part $\vec{x}$ and dual part $p \times \vec{x}$ are perpendicular, the corresponding dual vector $\widehat{x}$ is unit and hence represents the line $l$ in the dual unit sphere model. Together with this representation, Study’s other foundational contributions gave rise to the systematic investigation of kinematics of curves.

As mentioned, the aim of this paper is to study curves in the dual setting. Indeed, curves in the $3$-dimensional dual space $\d^3$ have been investigated by several authors. Without attempting to provide a complete list, we refer the reader to \cite{ay,lcj,naj,oo,ty,vel}. However, more recently, Rafael L\'opez \cite{lo2} pointed out that the arc-length reparametrization of curves in $\d^3$ commonly used in the literature contains a misformulation. This is particularly significant since the arc-length parameter is the fundamental object for establishing invariants of curves.

More clearly, let $\langle,\rangle$ be the dual metric on $\d^n$, obtained by extending the Euclidean metric of $\e^3$ under the condition $\varepsilon^2=0$. We denote by $|\cdot|$ the norm induced by $\langle,\rangle$. Let $I\subset \r$ be a real interval and $\gamma:I\to\d^3$ be a parametrized curve given by $\gamma(s)=\alpha(s)+\varepsilon \beta(s)$, where $\alpha,\beta:I\to \e^3$ are smooth curves satisfying $|\alpha'(s)|=1$, for all $s\in I$. The commonly used arc-length parameter $\widehat{s}$ is defined by
\begin{equation}\label{introarc}
\widehat{s}=\int^s_0 |\gamma'(u)|du=s+ \varepsilon f(s),
\end{equation}
where $f(s)=\int^s_0 \langle\alpha'(u),\beta'(u)\rangle du.$

L\'opez explicitly showed that the parameter $\widehat{s}$ given in \eqref{introarc} does not define a dual parameter. Indeed, in order for $\widehat{s}$ to be a dual parameter, its real and dual parts must be independent variables; in other words, it must define an open set in $\d\cong \r^2$. In this case, however, the dual part $f(s)$ of the arc-length parameter $\widehat{s}$ depends on the real part $s$. Thus, $\widehat{s}$ traces a curve in $\r^2$ rather than describing a domain and therefore cannot be regarded as a dual parameter. Consequently, this makes not possible to define the derivative of $\gamma$ with respect to $\widehat{s}$ as a dual variable.

On the other hand, for differentiation with respect to a real parameter to be well-defined, the dual part $f(s)$ of the arc-length parameter $\widehat{s}$ must vanish. In addition, the real part $\alpha(s)$ is also required to be regular.

After fixing this misformulation, L\'opez \cite{lo2} established fundamental existence theorems for curves in the dual setting and provided a classification of those with constant curvature and torsion.

Motivated by L\'opez’s comprehensive work \cite{lo2}, we investigate curves in the dual affine plane $\d^2$ and dual Lorentzian plane $\d_1^2$ parametrized by a real variable. The investigation in $\d_1^2$ constitutes a natural extension of \cite{lo2}. However, the study of equiaffine invariants for curves in $\d^2$ requires, in principle, a different approach.

In particular, we introduce equiaffine arc-length for a curve $\gamma(s)=\alpha(s)+\varepsilon \beta(s)$, $s\in I\subset \r$, in $\d^2$ satisfying $(\alpha'(s),\alpha''(s))=1$ given by
$$
\widehat{s}=\int^s_0 \sqrt[3]{(\gamma',\gamma'')}du=s+\frac{\varepsilon}{3}\int^s_0[(\alpha'(u),\beta''(u))+(\beta'(u),\alpha''(u))]du,
$$
where $(,)$ denotes the determinant. As L\'opez pointed out, in order to study curves in $\d^2$ parametrized by a real variable, the dual part of $\widehat{s}$ must vanish, while its real part must be nondegenerate.

In Section \ref{sec3}, by means of a system of ODEs involving the equiaffine curvature, we first provide a characterization of curves $\gamma(s)=\alpha(s)+\varepsilon \beta(s)$ satisfying $\kappa_\gamma=\kappa_\alpha$ (see Theorem \ref{eqk=k}), where $\kappa_\gamma$
and $\kappa_\alpha$ denote the equiaffine curvatures of $\gamma$ and $\alpha$, respectively. We then classified such curves when $\kappa_\gamma$ is a dual constant. An interesting observation is that if $\kappa_\gamma$ is a dual constant, then it is either a pure dual or a pure real constant (see Corollary \ref{eqcurva0}, Theorem \ref{eqcurva01} and Corollary \ref{eqcurva02}).

The study of $\d^3$ endowed with a Lorentzian metric was initiated by U\u{g}urlu ve \c{Ç}alı\c{s}kan \cite{uc} and this space is denoted by $\d_1^3$. See also \cite{get}. Within the study of curves in $\d_1^3$, the same misformulation can also be found in several works \cite{ay,acy,oo}.

In Section \ref{sec4}, we also classify spacelike and timelike curves in $\d_1^2$ whose curvatures coincide with those of their real parts (see Corollary \ref{k=k}). Moreover, we provide a complete classification of spacelike and timelike curves whose curvature is a dual constant (see Theorem \ref{lclass}).

\section{Preliminaries}

Let $\r^2$ be the real affine plane and let $\vec{u},\vec{v}\in \r^2$ be vectors given by $\vec{u}=(u_1,u_2)$ and $\vec{v}=(v_1,v_2)$. We endow $\r^2$ with a fixed area form $\det (\vec{u},\vec{v})=u_1v_2-u_2v_1$. For simplicity, we use $(\cdot,\cdot)$ to denote the determinant. 

Let $(x,y) $ be the canonical coordinates of $\r^2$. An equiaffine transformation $F:\r^2 \to \r^2$ is given by
$$
(x,y) \mapsto F(x,y), \quad F(x,y)=(a_{11}x+a_{12}y+m,a_{21}x+a_{22}y+n), 
$$
where $a_{ij},m,n \in \r$ and $a_{11}a_{22}-a_{12}a_{21}=1.$

Let $\alpha:I\subset \r\to \r^2$ be a non-degenerate smooth curve, given by $\alpha=\alpha(t)$, that is, $(\alpha'(t),\alpha''(t))\neq 0$ for all $t\in I$. Then, there is an equaffine arc-length reparametrization of $\alpha$ given by $$s(t)=\int^t_{t_0}\sqrt[3]{(\alpha'(u),\alpha''(u))}du.$$ With respect to this parameter, we have $(\alpha'(s),\alpha''(s))=1$ for all $s$. Moreover, the equiaffine curvature of $\alpha$ is defined  by $\kappa_\alpha(s)=(\alpha''(s),\alpha'''(s))$.

If $\kappa_\alpha$ is a real constant $r_\alpha \in \r$, then, up to equiaffine transformations of $\r^2$ $\alpha$ is either a parabola ($r_\alpha =0$), an ellipse ($r_\alpha >0$) or a hyperbola ($r_\alpha <0$). More clearly, these curves can be written in the explicit forms $y=\frac12x^2$ and $r_\alpha x^2+r_\alpha^2 y^2=1$ (see \cite{ns}).

We next endow $\r^2$ with the Lorentzian metric $\langle ,\rangle =dx^2-dy^2$. We denote the Lorentz-Minkowski plane by $\l^2=(\r^2,\langle ,\rangle)$.  A vector $\vec{u}\in \l^2$ is called spacelike if $\langle \vec{u},\vec{u}\rangle>0$ or $\vec{u}=0$, timelike if $\langle \vec{u},\vec{u}\rangle>0$ and lightlike if $\langle \vec{u},\vec{u}\rangle=0$ and $\vec{u} \neq 0$. 

Let $M=\mbox{diag}(1,-1)$ and consider the affine transformation
$$
F:\l^2\to \l^2, \quad p\mapsto F(p)=Ap+b,
$$
where $A$ is a $2\times2$ matrix and $b\in \l^2$. The transformation $F$ is called a Lorentzian isometry if it satisfies $A^\top M A=M$. Such Lorentzian isometries can be classified into four types (see \cite{lo1,one} for details).

Let $\vec{u},\vec{v}\in \l^2$ satisfy $\langle \vec{u},\vec{v}\rangle=0$. If $\vec{u}$ is spacelike (resp. timelike), then $\vec{v}$ is timelike (resp. spacelike). Moreover, if $\vec{u}$ is lightlike, then $\vec{v}$ is also lightlike and necessarily linearly dependent with $\vec{u}$.

Let $\alpha:I\subset \r\to \l^2$ a regular curve, namely $\alpha'(t)\neq 0$ for all $t\in I$. The curve $\alpha(t)$ is called spacelike (resp. timelike) curve if the velocity $\alpha'(t)$ is a spacelike (resp. timelike) vector for all $t\in I$. Moreover, a point $\alpha(t)$ is called lightlike if the vector $\alpha'(t)$ is lightlike at $t$.

A spacelike or timelike curve $\alpha(s)\subset \l^2$, $s\in I$, is said to be parametrized by arc-length if $|\alpha'(s)|=\sqrt{|\langle \alpha
',\alpha'\rangle|}=1$, for all $s\in I$. Let $T_\alpha=\alpha'$ be the tangent vector and $N_\alpha$ the corresponding normal vector such that $\langle T_\alpha,T_\alpha\rangle =\delta \in \{-1,1\}$ and $\langle N_\alpha,N_\alpha\rangle =-\delta$. Then, the Frenet formulas for the curve $\alpha$ in $\l^2$ are given by
$$
T'_\alpha=\kappa_\alpha N_\alpha, \quad N'_\alpha=\kappa_\alpha T_\alpha,
$$
where $\kappa_\alpha=-\delta \langle T',N \rangle$ is called the curvature of $\alpha$.

Set $\theta(s)={\textstyle\int^s}\kappa_\alpha (u)du$. Up to isometries of $\l^2$, a spacelike arc-length parametrized curve $\alpha(s)$ is uniquely determined by its curvature and can be written as (see \cite{ccc})
$$
\alpha(s)=\left ({\textstyle\int^s}\cosh \theta(u)du, {\textstyle\int^s}\sinh \theta(u)du \right ).
$$
For timelike curves, the corresponding expression is
$$
\alpha(s)=\left ({\textstyle\int^s}\sinh \theta(u)du, {\textstyle\int^s}\cosh \theta(u)du \right ).
$$

\section{Curves in dual affine plane}\label{sec3}

Let $M_2(\d)$ denote the set of all $2\times 2$ matrices $A=(a_{ij})$ whose entries are dual numbers. The determinant of such a matrix is defined by $\det A=a_{11}a_{22}-a_{12}a_{21}$. We denote by $SL(2,\d)=\{A\in M_2(\d): \det A=1\}$ the the corresponding special linear group over the ring of dual numbers.

In this section, we study the invariant of curves parametrized by a real variable in $\d^2$ under the action of the group $SL(2,\d)$. For this, let $\gamma:I\subset \r \to \d^2$ be a smooth curve given by $\gamma(t)=\alpha(t)+\varepsilon\beta(t)$, where $\alpha,\beta:I\to \r^2$ are smooth functions. We can also represent $\gamma$ in column form
$$
\gamma(t)=\begin{bmatrix}
\gamma_1(t) \\ 
\gamma_2(t)
\end{bmatrix},
$$
where $\gamma_i(t)=\alpha_i(t)+\varepsilon \beta_i(t)$ with $\alpha_i,\beta_i$ smooth real-valued functions on $I$.

If $\alpha\subset \r^2$ is non-degenerate, then a direct calculation yields
$$
(\gamma',\gamma'')=(\alpha',\alpha'')+\varepsilon((\alpha',\beta'')+(\beta',\alpha''))
$$ 
and
$$
(\gamma',\gamma'')^{\frac13}=(\alpha',\alpha'')^{\frac13}+\frac{\varepsilon}{3(\alpha',\alpha'')^{\frac23}}((\alpha',\beta'')+(\beta',\alpha'')).
$$
Therefore, we introduce the following.

\begin{definition}
A curve $\gamma(t)=\alpha(t)+\varepsilon\beta(t)$, $t\in I\subset \r$, is said to be non-degenerate in $\d^2$ if its real part $\alpha$ is non-degenerate in $\r^2$, that is, $(\alpha'(t),\alpha''(t))\neq 0$ for all $t\in I$. Moreover, $\gamma$ is said to be parametrized by equiaffine arc-length if $(\gamma'(t),\gamma''(t))=1$ for all $t\in I$.

\end{definition}

Assume that $s$ is the equiaffine arc-length parameter for the curve $\alpha\subset \r^2$, namely $(\alpha'(s),\alpha''(s))=1$, for all $s\in I\subset \r$. Consider the parameter $\widehat{s}$ given by
$$
\widehat{s}=\int^s_0(\gamma'(u),\gamma''(u))^{\frac13}du=s+\frac{\varepsilon}{3}\int^s_0[(\alpha'(u),\beta''(u))+(\beta'(u),\alpha''(u))]du.
$$
Therefore, for a non-degenerate $\gamma$ to be equiaffinely reparametrized, it is necessary that
$$
(\alpha'(s),\beta''(s))+(\beta'(s),\alpha''(s))=0, \quad s\in I.
$$

\begin{proposition}\label{eqarc}
Let $\gamma(t)=\alpha(t)+\varepsilon \beta(t)$, $t\in I\subset \r$, be a non-degenerate curve in $\d^2$. The curve $\gamma$ admits an equiaffine arc-length reparametrization if and only if 
$$
(\alpha'(t),\beta''(t))+(\beta'(t),\alpha''(t))=0, \quad t\in I.
$$
\end{proposition}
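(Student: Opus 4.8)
The plan is to record how the affine area $(\gamma',\gamma'')$ transforms under a change of the real parameter, and to combine this with the expansion $(\gamma',\gamma'')=(\alpha',\alpha'')+\varepsilon\big((\alpha',\beta'')+(\beta',\alpha'')\big)$ computed above. In line with the convention that curves in $\d^2$ are to be parametrized by a real variable, a \emph{reparametrization} of $\gamma$ here means a change $t\mapsto s$ of real variables, $s=s(t)$, with $s'(t)\neq 0$ on $I$. First I would establish the transformation rule: if $\gamma$ is reparametrized by such an $s$, the chain rule gives $\gamma'=s'\,\tfrac{d\gamma}{ds}$ and $\gamma''=s''\,\tfrac{d\gamma}{ds}+(s')^{2}\,\tfrac{d^{2}\gamma}{ds^{2}}$, and hence, by the $\d$-bilinearity of $(\cdot,\cdot)$ and $(v,v)=0$,
$$
(\gamma',\gamma'')=\big(s'(t)\big)^{3}\,\Big(\tfrac{d\gamma}{ds},\tfrac{d^{2}\gamma}{ds^{2}}\Big).
$$
Thus $\gamma$ is parametrized by equiaffine arc-length in the variable $s$, i.e. $\big(\tfrac{d\gamma}{ds},\tfrac{d^{2}\gamma}{ds^{2}}\big)=1$, if and only if $(\gamma'(t),\gamma''(t))=\big(s'(t)\big)^{3}$.

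From this, I would obtain both implications. For necessity: if such an $s$ exists, then $\big(s'(t)\big)^{3}$ is a \emph{real} number (since $s$ is real-valued), so equating the dual parts on the two sides of $(\alpha',\alpha'')+\varepsilon\big((\alpha',\beta'')+(\beta',\alpha'')\big)=\big(s'(t)\big)^{3}$ yields $(\alpha'(t),\beta''(t))+(\beta'(t),\alpha''(t))=0$ for all $t\in I$ (and the real parts give $\big(s'(t)\big)^{3}=(\alpha'(t),\alpha''(t))$, which actually determines $s$ up to orientation). For sufficiency: assuming the identity on $I$, one has $(\gamma',\gamma'')=(\alpha',\alpha'')$, which is real and, by non-degeneracy, nowhere zero on the interval $I$; then $s(t):=\int_{t_{0}}^{t}(\alpha'(u),\alpha''(u))^{1/3}\,du$ has $s'(t)=(\alpha'(t),\alpha''(t))^{1/3}\neq 0$ everywhere, so it is an admissible reparametrization, and since division by the nonzero real $\big(s'\big)^{3}$ is well defined in $\d$, in this parameter $\big(\tfrac{d\gamma}{ds},\tfrac{d^{2}\gamma}{ds^{2}}\big)=(\gamma',\gamma'')/\big(s'\big)^{3}=(\alpha',\alpha'')/(\alpha',\alpha'')=1$, so $\gamma$ admits an equiaffine arc-length reparametrization.

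The computations involved are routine, so I do not expect a serious obstacle; the one point that deserves care is the logic of the necessity direction, namely that imposing $(\gamma',\gamma'')=1$ as an equality of \emph{dual} numbers constrains the dual part and not merely the real part, and that the real factor $\big(s'\big)^{3}$ coming from a real reparametrization cannot absorb a nonzero dual part. Equivalently --- and this is how the proof may be phrased to match the discussion preceding the statement --- one can work with the candidate $\widehat{s}(t)=\int_{t_{0}}^{t}(\gamma'(u),\gamma''(u))^{1/3}\,du$, whose dual part equals $\tfrac{1}{3}\int_{t_{0}}^{t}\frac{(\alpha',\beta'')+(\beta',\alpha'')}{(\alpha',\alpha'')^{2/3}}\,du$; using that the cube root of $a+\varepsilon b$ with $a\neq 0$ has vanishing dual part precisely when $b=0$, this dual part is identically zero exactly when the stated identity holds, which is what is needed for $\widehat{s}$ to be an honest real parameter. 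The non-degeneracy hypothesis is used only to make the $s(t)$ built in the sufficiency step a genuine local diffeomorphism.
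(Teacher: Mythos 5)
Your proposal is correct and follows essentially the same route as the paper: the paper's justification is precisely the computation of $(\gamma',\gamma'')^{1/3}=(\alpha',\alpha'')^{1/3}+\tfrac{\varepsilon}{3}(\alpha',\alpha'')^{-2/3}\bigl((\alpha',\beta'')+(\beta',\alpha'')\bigr)$ together with the requirement (following L\'opez) that the dual part of the candidate arc-length parameter $\widehat{s}$ vanish so that it defines a genuine real reparametrization. Your added transformation rule $(\gamma',\gamma'')=(s')^{3}\bigl(\tfrac{d\gamma}{ds},\tfrac{d^{2}\gamma}{ds^{2}}\bigr)$ is a harmless elaboration of the same argument.
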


\begin{remark}
In the classical setting $\r^2$, non-degeneracy is a necessary and sufficient condition for the existence of an equiaffine arc-length reparametrization. However, in the dual setting $\d^2$, non-degeneracy is only a necessary condition; it is not sufficient and the extra condition in Proposition \ref{eqarc} must also be satisfied.
\end{remark}

\begin{example}
Let $\alpha(s)=(\sin \frac{t}{2}, -\cos \frac{t}{2})$ and $\beta(s)=(\cos \frac{t}{2}, \sin \frac{t}{2})$, $t\in \r$. Then, for all $t\in \r$, we have 
$$
(\alpha'(t),\alpha''(t))=\frac18, \quad \alpha'(t)=-2\beta''(t), \quad \alpha''(t)=\frac12\beta'(t).
$$
Hence, the curve $\gamma(t)=\alpha(t)+\varepsilon \beta(t)$ is non-degenerate and satisfies the condition in Proposition \ref{eqarc}. By the change of parameter $t=2s$, $s\in \r$, we obtain the equiaffine arc-length reparametrization of $\gamma$, that is,
\begin{equation}\label{exeq}
\gamma(s)=(\sin s, -\cos s)+\varepsilon(\cos s, \sin s),
\end{equation}
which satisfies $(\gamma'(s),\gamma''(s))=1$, for all $s\in \r$.
\end{example}

Let $\gamma(s)=\alpha(s)+\varepsilon \beta(s)$, $s\in I\subset \r$, be parametrized by equiaffine arc-length. Then, $(\gamma',\gamma'')=1$ on $ I$. Differentiating, we obtain $(\gamma'(s),\gamma'''(s))=0$, which implies the existence of a dual-valued smooth function such that $\gamma'''=-\kappa_\gamma \gamma'$. Hence, we have $\kappa_\gamma=(\gamma'',\gamma''')$.

\begin{definition}
Let $\gamma(s)=\alpha(s)+\varepsilon \beta(s)$, $s\in I\subset \r$, be a curve parametrized by equiaffine arc-length in $\d^2$. The function 
$$
\kappa_\gamma :I \to \d, \quad \kappa_\gamma(s)=(\gamma''(s),\gamma'''(s))
$$ 
is called the equiaffine curvature of $\gamma$.
\end{definition}

By a computation, we obtain the following relation between the equiaffine curvatures of $\gamma$ and its real part
\begin{equation}\label{eqrel}
\kappa_\gamma=\kappa_\alpha+\varepsilon[(\alpha'',\beta''')+(\beta'',\alpha''')],
\end{equation}
where $\kappa_\alpha$ denotes the equiaffine curvature of $\alpha$, given by $\kappa_\alpha=(\alpha'',\alpha''')$. 

For example, the curve given in \eqref{exeq} has $\kappa_\gamma=1$. Indeed, it satisfies
$$
\kappa_\alpha=1, \quad \alpha''=-\beta''', \quad \alpha'''=\beta''.
$$

Using relation \eqref{eqrel}, one can address the problem of finding all curves in $\d^2$ which satisfy $\kappa_\gamma=\kappa_\alpha$. This condition also provides a characterization of curves in $\d^2$ whose curvature function is purely real-valued.

\begin{theorem}  \label{eqk=k}
Let $\gamma(s)=\alpha(s)+\varepsilon \beta(s)$, $s\in I\subset \r$, be a curve parametrized by equiaffine arc-length in $\d^2$ and let $\kappa_\gamma$ and $\kappa_\alpha$ denote the equiaffine curvatures of $\gamma$ and its real part, respectively. Then $\gamma$ satisfies $\kappa_\gamma=\kappa_\alpha$ if and only if $\beta'=x\alpha'+y\alpha'',$ where $x$ and $y$ are smooth functions on $I$, satisfying the system of ODEs
\begin{equation} \label{eqk=ksys}
\left\lbrace
\begin{array}{l}
x=-\frac12 y',\\
y'''+4\kappa_\alpha y'+2\kappa'_\alpha y=0.
\end{array}
\right.
\end{equation}
\end{theorem}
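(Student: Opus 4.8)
The plan is to work in the equiaffine moving frame of the real part $\alpha$. Since $\gamma$ is parametrized by equiaffine arc-length, its real part satisfies $(\alpha',\alpha'')=1$, so $\{\alpha'(s),\alpha''(s)\}$ is a basis of $\r^2$ for every $s\in I$, and the same argument that produced $\gamma'''=-\kappa_\gamma\gamma'$ gives the equiaffine Frenet relation $\alpha'''=-\kappa_\alpha\alpha'$ (differentiate $(\alpha',\alpha'')=1$ to get $(\alpha',\alpha''')=0$, then take the determinant against $\alpha''$). In particular $\beta'$ admits a unique expansion $\beta'=x\alpha'+y\alpha''$ with $x,y$ smooth on $I$; this is the ansatz whose coefficients we must constrain, so half of the statement (the existence of such $x,y$) is automatic and the content is in identifying the two ODEs.

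First I would differentiate the expansion twice, each time replacing $\alpha'''$ by $-\kappa_\alpha\alpha'$, to obtain $\beta''=A\alpha'+B\alpha''$ with $A=x'-\kappa_\alpha y$, $B=x+y'$, and $\beta'''=(A'-\kappa_\alpha B)\alpha'+(A+B')\alpha''$. Then I would feed these into the two scalar identities that govern the problem. The standing equiaffine arc-length hypothesis, via Proposition \ref{eqarc}, is $(\alpha',\beta'')+(\beta',\alpha'')=0$; expanding the determinants in the frame (using $(\alpha',\alpha'')=1$ and $(\alpha',\alpha')=(\alpha'',\alpha'')=0$) collapses this to $(\alpha',\beta'')+(\beta',\alpha'')=B+x=2x+y'$, so the hypothesis is exactly $x=-\tfrac12 y'$, the first equation of \eqref{eqk=ksys}.

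Next, by \eqref{eqrel} the condition $\kappa_\gamma=\kappa_\alpha$ is equivalent to the vanishing of the dual part $(\alpha'',\beta''')+(\beta'',\alpha''')$. Expanding again in the frame gives $(\alpha'',\beta''')+(\beta'',\alpha''')=-(A'-\kappa_\alpha B)+\kappa_\alpha B=-A'+2\kappa_\alpha B$; substituting $A$, $B$ and $x=-\tfrac12 y'$ (together with its derivatives) turns $-A'+2\kappa_\alpha B$ into $\tfrac12(y'''+4\kappa_\alpha y'+2\kappa_\alpha' y)$, so $\kappa_\gamma=\kappa_\alpha$ holds precisely when $y'''+4\kappa_\alpha y'+2\kappa_\alpha' y=0$, the second equation of \eqref{eqk=ksys}. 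This proves the ``only if'' direction, with the first ODE coming from the arc-length hypothesis and the second from $\kappa_\gamma=\kappa_\alpha$. For the converse, the computation is read backwards: if $\beta'=x\alpha'+y\alpha''$ with $x,y$ satisfying \eqref{eqk=ksys}, the relation $x=-\tfrac12 y'$ is consistent with (indeed forced by) the standing hypothesis, and the displayed identity shows the dual part of $\kappa_\gamma-\kappa_\alpha$ equals $\tfrac{\varepsilon}{2}(y'''+4\kappa_\alpha y'+2\kappa_\alpha' y)=0$, so $\kappa_\gamma=\kappa_\alpha$.

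There is no deep obstacle here; the proof is a finite computation with $2\times2$ determinants. The points that require care are keeping the orientation signs straight when expanding determinants against the frame $\{\alpha',\alpha''\}$ (note $(\alpha'',\alpha')=-1$), and correctly propagating the substitution $x=-\tfrac12 y'$ and its derivatives through the second identity so that the third-order ODE emerges with precisely the coefficients $4$ and $2$.
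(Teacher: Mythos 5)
Your proposal is correct and follows essentially the same route as the paper's proof: expand $\beta'$ in the frame $\{\alpha',\alpha''\}$, differentiate using $\alpha'''=-\kappa_\alpha\alpha'$, and read off the first ODE from the arc-length condition of Proposition \ref{eqarc} and the second from the vanishing of the dual part in \eqref{eqrel}. Your intermediate quantities $A,B$ are exactly the paper's $X,Y$, and the sign bookkeeping and the final coefficients $4$ and $2$ all check out.
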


\begin{proof}
Assume that $\kappa_\gamma=\kappa_\alpha $. We can write $\beta'$ in terms of the basis $\{\alpha',\alpha''\}$ as
$$
\beta'=x\alpha'+y\alpha'',
$$
where $x$ and $y$ are smooth functions on $I$. Differentiating, we obtain
$$
\beta''=(x'-\kappa_\alpha y)\alpha'+(x+y')\alpha'',
$$
where we have used $\alpha'''=-\kappa_\alpha \alpha'$. Set 
$$
X=x'-\kappa_\alpha y, \quad Y=x+y'.
$$
Differentiating $\beta''=X\alpha'+Y\alpha''$, we get
$$
\beta'''=(X'-\kappa_\alpha Y)\alpha'+(X+Y')\alpha''.
$$
Using the equiaffine arc-length condition in Proposition \ref{eqarc}, we derive
$$
0=(\alpha',\beta'')+(\beta',\alpha'')=2x+y'.
$$
In addition, from \eqref{eqrel} it follows that
$$
0=(\alpha'',\beta''')+(\beta'',\alpha''')=X'-2\kappa_\alpha Y,
$$
or equivalently
$$
(x'-\kappa_\alpha y)'-2\kappa_\alpha(x+y')=0.
$$
Using $y'=-2x$, we simplify the last equation to
$$
y'''+2\kappa'_\alpha y+4\kappa_\alpha y'=0.
$$

Conversely, if $\beta'=x\alpha' + y\alpha''$, where $x$ and $y$ are smooth functions satisfying \eqref{eqk=ksys}, then by reversing the above computations we obtain $(\alpha',\beta'') + (\beta',\alpha'') = 0$ and $(\alpha'',\beta''') + (\beta'',\alpha''') = 0$. Therefore, from \eqref{eqrel} we conclude that $\kappa_\gamma = \kappa_\alpha$. This completes the proof.
\end{proof}

The existence of solutions to \eqref{eqk=ksys} is guaranteed by standard local existence theorems for ODEs. In addition, explicit solutions can be obtained in certain particular cases. For instance, consider the case $\kappa_\gamma=\kappa_\alpha=0$. Then, up to equiaffine transformations of $\r^2$, we have $\alpha(s)=(s,\frac{s^2}{2})$, $s\in I$. Therefore by solving \eqref{eqk=ksys} we derive 
\begin{equation} \label{eqk=ksys00}
\beta(s)=(c_0s^2+c_1 s,-\frac{c_1}{2}s^2+c_2s) +\beta_0, 
\end{equation} 
where $c_0,c_1,c_2 \in \r$ are real constants and $\beta_0\in \r^2$ is a constant vector. Consequently, we have $$(\beta'(s),\beta''(s))=-(c_1^2+2c_0c_2), \quad s\in I,$$ implying that $\beta(s)$ is a straight line if $c_1^2+2c_0c_2=0$. Otherwise $\beta(s)$ is a parabola. 

As a consequence, we have classified all curves in $\d^2$ with vanishing equiaffine curvature.

\begin{corollary}\label{eqcurva0}
Let $\gamma(s)=\alpha(s)+\varepsilon \beta(s)$, $s\in I\subset \r$, be a curve parametrized by equiaffine arc-length in $\d^2$. Its equiaffine curvature $\kappa_\gamma$ vanishes identically if and only if, up to equiaffine transformations of $\r^2$, the real part is given by the parabola $\alpha=(s,\frac{s^2}{2})$ and the dual part $\beta(s)$ is given by \eqref{eqk=ksys00}.
\end{corollary}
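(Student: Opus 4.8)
The plan is to derive the corollary directly from Theorem \ref{eqk=k} by specializing to the case $\kappa_\gamma = 0$. First I would observe that since $\kappa_\gamma$ is purely real (indeed zero), the relation $\kappa_\gamma = \kappa_\alpha$ holds by \eqref{eqrel}, so Theorem \ref{eqk=k} applies and moreover forces $\kappa_\alpha = 0$. By the classical classification recalled in the Preliminaries, a non-degenerate curve in $\r^2$ with vanishing equiaffine curvature is, up to an equiaffine transformation, the parabola $\alpha(s) = (s, \tfrac{s^2}{2})$; this handles the real part.

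Next I would solve the ODE system \eqref{eqk=ksys} with $\kappa_\alpha \equiv 0$. The second equation collapses to $y''' = 0$, so $y(s) = a_2 s^2 + a_1 s + a_0$ for real constants $a_0, a_1, a_2$, and then $x = -\tfrac12 y' = -a_2 s - \tfrac12 a_1$. Feeding $\beta' = x\alpha' + y\alpha''$ back in with $\alpha' = (1,s)$ and $\alpha'' = (0,1)$ gives $\beta'(s) = \bigl(x, xs + y\bigr)$, which is a vector of polynomials of degree at most two in $s$; integrating componentwise yields $\beta(s)$ as a vector of cubic-or-lower polynomials, and after relabeling the constants of integration one recovers exactly the form \eqref{eqk=ksys00} with the additive constant vector $\beta_0$. (One should check that the $s^3$ terms cancel: the first component of $\beta'$ has no $s^2$ term, and in the second component the $s^2$ coefficient is $a_2 + a_2 = 2a_2$ from $xs = -a_2 s^2 + \cdots$ wait — more carefully, $xs = -a_2 s^2 - \tfrac12 a_1 s$ and $y = a_2 s^2 + a_1 s + a_0$, so their sum is $\tfrac12 a_1 s + a_0$, confirming the second component of $\beta'$ is only linear in $s$, hence $\beta$ is at most quadratic, matching \eqref{eqk=ksys00}.)

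For the converse direction, I would note it is essentially immediate: if $\alpha$ is the stated parabola and $\beta$ has the form \eqref{eqk=ksys00}, then one checks $(\alpha', \alpha'') = 1$, so $\alpha$ is equiaffinely parametrized, and a direct substitution shows the pair $(x,y)$ read off from $\beta' = x\alpha' + y\alpha''$ satisfies \eqref{eqk=ksys} with $\kappa_\alpha = 0$; hence by the converse part of Theorem \ref{eqk=k}, $\kappa_\gamma = \kappa_\alpha = 0$. Finally I would remark that $\gamma$ is indeed parametrized by equiaffine arc-length, i.e.\ $(\gamma', \gamma'') = 1$, which follows since the dual part of $(\gamma', \gamma'')$ is $(\alpha', \beta'') + (\beta', \alpha'')$, already shown to vanish in the forward computation of Theorem \ref{eqk=k}.

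I do not anticipate a serious obstacle here — the result is a corollary and the only real content is the ODE integration, which is elementary. The one point requiring care is bookkeeping of the integration constants: the three free parameters $a_0, a_1, a_2$ in $y$ together with the two components of the integration constant of $\beta'$ give five real degrees of freedom, but one of them ($a_0$, the constant term of $y$) can be absorbed, so that \eqref{eqk=ksys00} is written with the three essential shape parameters $c_0, c_1, c_2$ plus the harmless translation $\beta_0$. I would state explicitly the correspondence between $(a_0, a_1, a_2)$ and $(c_0, c_1, c_2)$ to make the match with \eqref{eqk=ksys00} transparent, and then conclude with the observation already in the text that $(\beta', \beta'') = -(c_1^2 + 2c_0 c_2)$ governs whether $\beta$ degenerates to a line.
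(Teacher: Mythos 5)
Your proposal is correct and follows essentially the same route as the paper: reduce to $\kappa_\gamma=\kappa_\alpha=0$ via \eqref{eqrel}, take $\alpha(s)=(s,\tfrac{s^2}{2})$ up to equiaffine transformations, and integrate the system \eqref{eqk=ksys} with $\kappa_\alpha\equiv 0$ to recover \eqref{eqk=ksys00}; your constant bookkeeping ($c_0=-a_2/2$, $c_1=-a_1/2$, $c_2=a_0$) matches the paper's formula exactly. The only difference is that you spell out the integration and the converse check, which the paper leaves implicit.
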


The following result provides a classification of curves in $\d^2$ whose equiaffine curvature $\kappa_\gamma$ is a dual constant. 

\begin{theorem}\label{eqcurva01}
Let $\gamma(s)=\alpha(s)+\varepsilon \beta(s)$, $s\in I$, be a curve parametrized by equiaffine arc-length in $\d^2$. Then its equiaffine curvature $\kappa_\gamma$ is a nonzero dual constant $\kappa_\gamma=r_\alpha+\varepsilon m$ with $m,r_\alpha \in\r$ if and only if, up to equiaffine transformations of $\r^2$, one of the following holds
\begin{enumerate}
\item[(a)] If $r_\alpha=0$ and $m\neq 0$, then $\alpha(s)=(s,\frac{s^2}{2})$ and
$$
\beta(s)=(-m\frac{s^3}{6}+c_0s^2+c_1s, -m\frac{s^4}{24}-\frac{c_1}{2}s^2+c_2s)+\beta_0.
$$
\item[(b)] If $r_\alpha >0 $, then necessarily $m=0$ and $\alpha(s)=(\frac{\sin (\sqrt{r_\alpha}s)}{\sqrt{r_\alpha}},-\frac{\cos (\sqrt{r_\alpha}s)}{r_\alpha})$,
$$
\beta(s)=(c_0\cos(\sqrt{r_\alpha}s) + c_1\sin(\sqrt{r_\alpha}s) ,\frac{-c_0}{\sqrt{r_\alpha}}\sin(\sqrt{r_\alpha}s)
+ \frac{c_1}{\sqrt{r_\alpha}}\cos(\sqrt{r_\alpha}s) )+\beta_0.
$$
\item[(c)] If $r_\alpha <0 $, then necessarily $m=0$ and $\alpha(s)=(\frac{\sinh(\sqrt{-r_\alpha}s)}{\sqrt{-r_\alpha}},
-\frac{\cosh(\sqrt{-r_\alpha}s)}{r_\alpha})$,
$$
\beta(s)=(c_0\cosh(\sqrt{-r_\alpha}s)+c_1\sinh(\sqrt{-r_\alpha}s),\,
-\frac{c_0}{\sqrt{-r_\alpha}}\sinh(\sqrt{-r_\alpha}s)
-\frac{c_1}{\sqrt{-r_\alpha}}\cosh(\sqrt{-r_\alpha}s)) + \beta_0,
$$
where $c_0,c_1,c_2 \in \r$ and $\beta_0 \in \r^2$ are constants.
\end{enumerate}
\end{theorem}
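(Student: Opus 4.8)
The plan is to reduce everything to the system \eqref{eqk=ksys} from Theorem \ref{eqk=k} and then solve the resulting linear ODEs case by case. The first step is to use relation \eqref{eqrel}: writing $\kappa_\gamma=r_\alpha+\varepsilon m$, the real part gives $\kappa_\alpha=r_\alpha$, so $\alpha$ has constant equiaffine curvature and is, up to equiaffine transformations of $\r^2$, one of the three standard conics recalled in Section \ref{sec3} (parabola if $r_\alpha=0$, ellipse if $r_\alpha>0$, hyperbola if $r_\alpha<0$), in the explicit parametrizations listed in (a), (b), (c). The dual part of \eqref{eqrel} then reads $(\alpha'',\beta''')+(\beta'',\alpha''')=m$. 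Now I would decompose $\beta'=x\alpha'+y\alpha''$ in the moving frame $\{\alpha',\alpha''\}$ as in the proof of Theorem \ref{eqk=k}; the equiaffine arc-length condition (Proposition \ref{eqarc}) still gives $2x+y'=0$, i.e. $x=-\tfrac12 y'$, exactly as before, while the computation of $(\alpha'',\beta''')+(\beta'',\alpha''')$ now yields the inhomogeneous equation $y'''+4r_\alpha y'+2r_\alpha' y=m$, and since $r_\alpha$ is constant this is simply
$$
y'''+4r_\alpha y'=m.
$$

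The second step is to solve this third-order constant-coefficient linear ODE in each case and translate back to $\beta$ via $\beta'=-\tfrac12 y'\alpha'+y\alpha''$, integrating once more. When $r_\alpha=0$ the equation is $y'''=m$, whose general solution is a cubic $y=\tfrac{m}{6}s^3+(\text{lower order})$; feeding this into $\beta'=-\tfrac12 y'\alpha'+y\alpha''$ with $\alpha=(s,\tfrac{s^2}{2})$, $\alpha'=(1,s)$, $\alpha''=(0,1)$ and integrating produces exactly the quartic/cubic expression in (a), after renaming the constants of integration as $c_0,c_1,c_2,\beta_0$ (the top-degree terms $-m\tfrac{s^3}{6}$ and $-m\tfrac{s^4}{24}$ coming precisely from the $\tfrac{m}{6}s^3$ piece of $y$). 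When $r_\alpha\neq 0$, the key observation — and the one that produces the dichotomy advertised in the statement — is that $y'''+4r_\alpha y'=m$ forces $m=0$: a particular solution would be $y=\tfrac{m}{4r_\alpha}s$, which contributes to $\beta'$ a term with a secular/non-periodic growth in the elliptic case and, more to the point, one must check against the constraint that $\widehat s$ have vanishing dual part (equivalently $2x+y'=0$, already used) together with periodicity/consistency; I expect the cleanest argument is simply that the linear term $y=\tfrac{m}{4r_\alpha}s$ makes $x=-\tfrac12 y'=-\tfrac{m}{8r_\alpha}$ a nonzero constant, and then re-examining the higher-order compatibility (differentiating $(\gamma',\gamma'')=1$ to the order needed, or directly recomputing $\kappa_\gamma$) shows the dual part of the curvature cannot actually be the prescribed nonzero $m$ unless $r_\alpha=0$; hence $m=0$ and we are back to the homogeneous equation $y'''+4r_\alpha y'=0$.

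The third step, assuming $m=0$ and $r_\alpha\neq0$, is routine: the characteristic equation is $\lambda(\lambda^2+4r_\alpha)=0$. For $r_\alpha>0$ the roots are $0,\pm 2i\sqrt{r_\alpha}$, so $y$ is a combination of $1,\cos(2\sqrt{r_\alpha}\,s),\sin(2\sqrt{r_\alpha}\,s)$; however the constant mode in $y$ only shifts $\beta$ by translations and rescalings of $\alpha$, which amounts to an equiaffine change, so effectively $y$ is a trigonometric combination with frequency $2\sqrt{r_\alpha}$. Substituting into $\beta'=-\tfrac12 y'\alpha'+y\alpha''$ with $\alpha=(\tfrac{\sin(\sqrt{r_\alpha}s)}{\sqrt{r_\alpha}},-\tfrac{\cos(\sqrt{r_\alpha}s)}{r_\alpha})$ and integrating — using the product-to-sum identities so that the frequency-$2$ terms combine with the frequency-$1$ components of $\alpha'$ and $\alpha''$ to leave only frequency-$1$ terms — gives precisely the $\beta$ in (b). The case $r_\alpha<0$ is identical with $\cos,\sin$ replaced by $\cosh,\sinh$ and frequency $\sqrt{-r_\alpha}$, giving (c). Conversely, for each listed pair $(\alpha,\beta)$ one verifies directly that $(\gamma',\gamma'')=1$ and that $\kappa_\gamma$ equals the claimed dual constant, by reversing these computations exactly as in the converse part of Theorem \ref{eqk=k}.

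The main obstacle I anticipate is the rigorous justification that a nonzero pure-dual contribution is incompatible with $r_\alpha\neq0$, i.e. the forced vanishing $m=0$ in cases (b) and (c). The ODE $y'''+4r_\alpha y'=m$ does have the solution $y=\tfrac{m}{4r_\alpha}s$ plus homogeneous terms, so the obstruction is not visible at the level of that single equation alone; it must come from an additional constraint — either the requirement that the equiaffine arc-length parameter $\widehat s$ be genuinely real (dual part identically zero, which we have built in as $x=-\tfrac12y'$ but whose full force including higher derivatives should be double-checked), or from insisting that $\gamma$ be defined on a domain where the frame $\{\alpha',\alpha''\}$ stays a basis and the construction is globally consistent. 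I would handle this by carefully recomputing $(\alpha'',\beta''')+(\beta'',\alpha''')$ with the linear term present and verifying that it contributes a term which cannot be absorbed, so that matching it to the constant $m$ while keeping $(\gamma',\gamma'')\equiv 1$ forces $m=0$; everything else is bookkeeping with elementary functions and relabelling of integration constants.
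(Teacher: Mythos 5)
Your frame-based reduction is sound in outline and, for case (a), does reproduce the paper's formula (modulo a harmless normalization: with $\beta'=x\alpha'+y\alpha''$ and $x=-\tfrac12 y'$, the dual part of $\kappa_\gamma$ works out to $\tfrac12 y'''+2r_\alpha y'$, so the inhomogeneous equation is $y'''+4r_\alpha y'=2m$, not $=m$). The genuine gap is exactly the one you flagged in your final paragraph, and it cannot be closed: there is no ``additional constraint''. Given $(\alpha',\alpha'')\equiv 1$, the hypotheses ``$\gamma$ is parametrized by equiaffine arc-length'' and ``$\kappa_\gamma=r_\alpha+\varepsilon m$'' are \emph{exactly} equivalent to the two scalar equations $2x+y'=0$ and $y'''+4r_\alpha y'=2m$; nothing further is imposed. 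Hence the particular solution $y=\tfrac{m}{2r_\alpha}s$, $x=-\tfrac{m}{4r_\alpha}$ produces a legitimate curve with $r_\alpha\neq 0$ and $m\neq 0$. Concretely, take $r_\alpha=1$, $\alpha(s)=(\sin s,-\cos s)$ and
$$
\beta(s)=\bigl(2s\cos s-3\sin s,\ 2s\sin s+3\cos s\bigr).
$$
A direct computation gives $(\alpha',\beta'')=1$ and $(\beta',\alpha'')=-1$, so $(\gamma',\gamma'')\equiv 1$ and $\gamma$ is parametrized by equiaffine arc-length; and $(\alpha'',\beta''')=3$, $(\beta'',\alpha''')=1$, so $\kappa_\gamma=1+4\varepsilon$. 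This contradicts item (b): the assertion ``necessarily $m=0$'' is false, and the same construction refutes (c).

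The reason the paper's proof nevertheless arrives at $m=0$ is an invalid step, not a constraint you overlooked. Writing $\beta=(x,y)$ in coordinates, the paper deduces from $(x'+y'')\cos(\sqrt{r_\alpha}s)+(\sqrt{r_\alpha}y'-x''/\sqrt{r_\alpha})\sin(\sqrt{r_\alpha}s)=0$ that both coefficients vanish ``since $\cos$ and $\sin$ are linearly independent''; that inference is valid only for \emph{constant} coefficients, whereas here they are functions of $s$ (in the example above they equal $-4s\sin s$ and $4s\cos s$, neither identically zero). For the same reason even the $m=0$ families listed in (b) and (c) are incomplete: $\beta=c\alpha'$ satisfies both conditions with $m=0$ yet is not of the stated form. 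So you should not attempt to complete your second step as written; the correct move is to carry the particular and the full homogeneous solutions of $y'''+4r_\alpha y'=2m$ through to $\beta$, which enlarges the classification in (b) and (c). Your approach is preferable precisely because it makes the complete set of constraints explicit and thereby exposes the error.
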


\begin{proof}
Assume that the equiaffine curvature of $\gamma$ is a nonzero dual constant, say $\kappa_\gamma=r_\alpha+\varepsilon m$, where $m^2+r_\alpha^2\neq 0$ and $m,r_\alpha \in\r$. Comparing with \eqref{eqrel}, we conclude
\begin{equation}\label{eqconst00}
\kappa_\alpha=r_\alpha, \quad (\alpha'',\beta''')+(\beta'',\alpha''')=m.
\end{equation}
Set $\beta(s)=(x(s),y(s))$, for smooth functions $x$ and $y$ on $I$. Excluding the case both $r_\alpha$ and $m$ are zero, we consider three possibilities: $r_\alpha = 0$ and $m \neq 0$, $r_\alpha >0 $ with $m$ arbitrary and $r_\alpha < 0$ with $m$ arbitrary.
\begin{enumerate}
\item[(a)] Case $r_\alpha = 0$ and $m \neq 0$. Up to equiaffine transformations of $\r^2$, the real part of $\gamma$ is given by $\alpha(s)=(s,\frac{s^2}{2})$. The equiaffine arc-length condition in Proposition \ref{eqarc} yields
\begin{equation}\label{eqconst01}
y''-sx''+x'=0.
\end{equation}
From \eqref{eqconst00}, it follows that $-x'''=m$. Substituting this into \eqref{eqconst01}, the proof of the first item of the thorem is completed.

\item[(b)] Case $r_\alpha >0 $ with $m$ arbitrary. After applying suitable  equiaffine transformations of $\r^2$, the real part of $\gamma$ is given by 
$$
\alpha(s)=\left (\frac{\sin (\sqrt{r_\alpha}s)}{\sqrt{r_\alpha}},-\frac{\cos (\sqrt{r_\alpha}s)}{r_\alpha} \right ).
$$
Since $(\alpha',\beta'')+(\beta',\alpha'')=0$, it follows that 
$$
(x'+y'')\cos (\sqrt{r_\alpha}s)+\left (\sqrt{r_\alpha}y'-\frac{1}{\sqrt{r_\alpha}}x'' \right )\sin (\sqrt{r_\alpha}s)=0.
$$
As the functions $\cos (\sqrt{r_\alpha}s)$ and $\sin (\sqrt{r_\alpha}s)$ are linerly independent, we conclude that 
\begin{equation}\label{eqconst02}
x'+y''=0, \quad r_\alpha y'=x''.
\end{equation}
Also, from \eqref{eqconst00} we have 
$$
(x'''-r_\alpha y'')\cos (\sqrt{r_\alpha}s)-\sqrt{r_\alpha}(x''+y''')\sin (\sqrt{r_\alpha}s)=m.
$$
Considering \eqref{eqconst02}, both coefficients vanish and hence $m=0$. Solving \eqref{eqconst02} then yields the expression for $\beta$ in item (b).
\item[(c)] Case $r_\alpha <0 $ with $m$ arbitrary. This case is analogous to the previous one.
\end{enumerate}
\end{proof}

For the real and dual parts $\alpha$ and $\beta$ given in the second item of Theorem \ref{eqcurva01}, by a suitable choice of the integration constants $c_0$ and $c_1$ there exists an area-preserving transformation of $\r^2$ mapping $\alpha$ to $\beta$. Indeed, consider the $2\times 2$ matrix
$$
A=\begin{bmatrix}
c_1\sqrt{r_\alpha} & -c_0 r_\alpha\\ 
-c_0 & -c_1\sqrt{r_\alpha}
\end{bmatrix},
$$
where $r_\alpha>0$ is a real constant. Then $\beta = A\alpha + \beta_0$. Since
$$
\det(A) = -r_\alpha(c_0^2 + c_1^2),
$$
by choosing $r_\alpha(c_0^2 + c_1^2)=1$, we obtain $\det(A) =-1$. Therefore, the resulting transformation $F(p)=A(p)+\beta_0$ of $\r^2$ is an area-preserving transformation but orientation-reversing.

In contrast, for the third item of Theorem \ref{eqcurva01}, by choosing $$r_\alpha(c_0^2- c_1^2)=1, \quad |c_0|>|c_1|,$$ we can establish an equiaffine transformation of $\r^2$ mapping the real part $\alpha$ to the dual part $\beta$, whose linear part is given by
$$
A=\begin{bmatrix}
c_1\sqrt{-r_\alpha} & -c_0 r_\alpha\\ 
-c_0 & -c_1\sqrt{-r_\alpha}
\end{bmatrix},
$$
where $r_\alpha<0$ is a real constant. 

Consequently, we have the following result.

\begin{corollary} \label{eqcurva02}
Let $\gamma(s)$, $s\in I$, be a curve parametrized by equiaffine arc-length in $\d^2$ with nondegenerate dual part. If the equiaffine curvature of $\gamma$ is a nonzero dual constant, then this constant is necessarily either pure real or pure dual. In addition, if it is a pure real constant $r_\alpha \neq 0$, then, up to equiaffine transformations of $\r^2$, the real and dual parts are the quadratic curves given, respectively, by $r_\alpha x^2+r_\alpha^2 y^2=1$ and
$$
\frac{(x-x_0)^2}{a^2}+r_\alpha \frac{(y-y_0)^2}{a^2}=1, 
$$
for some nonzero real constant $a$ and some point $(x_0,y_0)\in \r^2$.
\end{corollary}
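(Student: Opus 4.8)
The plan is to extract everything directly from Theorem \ref{eqcurva01}: the dichotomy will be an immediate byproduct of its case analysis, and the normal forms for $\alpha$ and $\beta$ will come from eliminating the parameter $s$ in the explicit parametrizations (b) and (c). First I would dispose of the dichotomy. Suppose $\kappa_\gamma=r_\alpha+\varepsilon m$ is a nonzero dual constant. By Theorem \ref{eqcurva01} exactly one of the alternatives (a), (b), (c) occurs. In (a) one has $r_\alpha=0$ and $m\neq 0$, so $\kappa_\gamma=\varepsilon m$ is pure dual; in (b) and (c) the theorem forces $m=0$, so $\kappa_\gamma=r_\alpha$ is pure real. (Nondegeneracy of the dual part plays no role in this step; it is needed only to ensure that the conics produced below are genuine, nondegenerate conics.)

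Next assume $\kappa_\gamma=r_\alpha\neq 0$ is pure real, so we are in case (b) ($r_\alpha>0$) or case (c) ($r_\alpha<0$) of Theorem \ref{eqcurva01}. Substituting the explicit $\alpha$ into $r_\alpha x^2+r_\alpha^2 y^2$ and using $\cos^2+\sin^2=1$ (resp. $\cosh^2-\sinh^2=1$) gives at once $r_\alpha x^2+r_\alpha^2 y^2=1$, the canonical equiaffine ellipse/hyperbola recalled in the preliminaries. For the dual part, write $\beta_0=(x_0,y_0)$ and, in case (b), set $p=\cos(\sqrt{r_\alpha}\,s)$, $q=\sin(\sqrt{r_\alpha}\,s)$, so that $x-x_0=c_0 p+c_1 q$ and $\sqrt{r_\alpha}\,(y-y_0)=c_1 p-c_0 q$; expanding $(x-x_0)^2+r_\alpha(y-y_0)^2$, the cross terms cancel and $p^2+q^2=1$ leaves $(x-x_0)^2+r_\alpha(y-y_0)^2=c_0^2+c_1^2$, whence $\frac{(x-x_0)^2}{a^2}+r_\alpha\frac{(y-y_0)^2}{a^2}=1$ with $a^2=c_0^2+c_1^2$. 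Case (c) is handled identically with $p=\cosh(\sqrt{-r_\alpha}\,s)$, $q=\sinh(\sqrt{-r_\alpha}\,s)$ and $p^2-q^2=1$, giving $a^2=c_0^2-c_1^2$ (applying, if necessary, an area-preserving but orientation-reversing reflection to normalize the right-hand side to $+1$, as in the discussion preceding the corollary).

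It then remains to see that $a\neq 0$, and this is precisely where nondegeneracy of $\beta$ enters: a short computation from the formula for $\beta$ gives $(\beta',\beta'')=-r_\alpha(c_0^2+c_1^2)$ in case (b) and $(\beta',\beta'')=-r_\alpha(c_0^2-c_1^2)$ in case (c), so, since $r_\alpha\neq 0$, nondegeneracy of the dual part forces $a^2\neq 0$, and the displayed equation is indeed that of an ellipse or a hyperbola. I do not expect any genuine obstacle here: the argument is essentially parameter elimination on top of Theorem \ref{eqcurva01}, and the only points requiring a little care are this last implication and, in the hyperbolic case, the bookkeeping of the sign of $c_0^2-c_1^2$.
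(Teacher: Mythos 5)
Your proposal is correct and follows essentially the same route as the paper: the dichotomy is read off from the case analysis of Theorem \ref{eqcurva01}, the implicit quadratic equations come from eliminating $s$ in the explicit parametrizations, and your determinant computation $(\beta',\beta'')=-r_\alpha(c_0^2+c_1^2)$ (resp. $-r_\alpha(c_0^2-c_1^2)$) is exactly the quantity the paper uses in the discussion preceding the corollary, so nondegeneracy of $\beta$ gives $a\neq 0$ just as you say. The only loose end is the sub-case $r_\alpha<0$ with $c_1^2>c_0^2$, where the right-hand side $c_0^2-c_1^2$ is negative and your proposed orientation-reversing normalization would disturb the normal form of $\alpha$ if applied to the whole curve; but the paper's own treatment tacitly restricts to $|c_0|>|c_1|$ and shares this gap.
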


\section{Curves in dual Lorentz-Minkowski plane} \label{sec4}

In this section, we introduce the Frenet formulas for curves parametrized by a real variable in $\d_1^2$ and then provide a classification of such curves whose curvature is a dual constant.

Let $\gamma:I\subset \r \to \d_1^2$ be a smooth curve given by $\gamma(t)=\alpha(t)+\varepsilon\beta(t)$. We say that $\gamma \subset \d_1^2$ is regular if $\gamma'(t)\neq 0$ for all $t\in I$. Moreover, the causal character of $\gamma \subset \d_1^2$ is defined to be the causal character of its real part $\alpha \subset \l^2$. 

Hence, we can describe all lightlike curves in $\d_1^2$. Assume that $\gamma\subset \d_1^2$ is a lightlike curve, that is a regular curve whose real part is lightlike. Since all lightlike curves $\alpha(t)\subset\l^2$ are well-known, a lightlike curve $\gamma(t)$ in $\d_1^2$ can therefore be written in the form
\begin{equation}\label{lig}
\gamma(t)=p+t\vec{v}+\varepsilon \beta(t), 
\end{equation}
where $p\in \l^2$ is some point, $\vec{v}\in \l^2$ is a lightlike vector and $\beta(t)\subset\l^2$ is an arbitrary curve. If, in addition, $\beta(t)\subset\l^2$ is also a lightlike curve whose velocity $\beta'$ parallel to $\vec{v}$, then the function $\langle \gamma',\gamma'\rangle=0$ on $I$.

In what follows, we provide a criterion for a curve $\gamma$ to admit a reparametrization by arc-length. The proof is analogous to the Euclidean case and hence we refer to \cite[Proposition 2.1]{lo2}.

\begin{proposition}\label{larc}
Let $\gamma(t)=\alpha(t)+\varepsilon \beta(t)$, $t\in I\subset \r$, be a curve. The curve $\gamma$ has a reparametrization by arc-length if and only if $\alpha\subset\l^2$ is a regular curve and the function $\langle \alpha', \beta'\rangle $ vanishes on $I$.
\end{proposition}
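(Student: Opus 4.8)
The plan is to adapt the argument of \cite[Proposition 2.1]{lo2} to the Lorentzian dual metric. Since the causal character of $\gamma$ is, by definition, that of its real part $\alpha\subset\l^2$, I would treat the spacelike case in detail, the timelike case being identical up to an overall sign. Using $\varepsilon^2=0$ and the bilinearity of the dual extension of $\langle,\rangle$, one computes
$$
\langle\gamma',\gamma'\rangle=\langle\alpha',\alpha'\rangle+2\varepsilon\langle\alpha',\beta'\rangle .
$$
Writing $\lambda(t)=|\alpha'(t)|=\sqrt{\langle\alpha'(t),\alpha'(t)\rangle}$, the regularity of $\alpha$ is precisely the condition $\lambda>0$ on $I$, which makes the dual square root meaningful and gives
$$
|\gamma'|=\sqrt{\lambda^2+2\varepsilon\langle\alpha',\beta'\rangle}=\lambda+\varepsilon\,\frac{\langle\alpha',\beta'\rangle}{\lambda}.
$$

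Next I would form the candidate arc-length parameter
$$
\widehat s(t)=\int_{t_0}^{t}|\gamma'(u)|\,du=\int_{t_0}^{t}\lambda(u)\,du+\varepsilon\int_{t_0}^{t}\frac{\langle\alpha'(u),\beta'(u)\rangle}{\lambda(u)}\,du,
$$
and invoke the principle recalled in the Introduction following \cite{lo2}: for $\gamma$ to admit a reparametrization by a \emph{real} arc-length variable, $\widehat s$ must genuinely be a real parameter, that is, its dual part must vanish identically on $I$ while its real part must be a diffeomorphism onto its image. The real part $\int_{t_0}^{t}\lambda$ has derivative $\lambda>0$ exactly when $\alpha$ is regular, hence is a diffeomorphism precisely in that case. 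The dual part vanishes for every $t\in I$ if and only if, after differentiating, $\langle\alpha',\beta'\rangle/\lambda\equiv0$, that is, $\langle\alpha',\beta'\rangle\equiv0$ on $I$, since $\lambda>0$. This establishes the two asserted conditions. For the converse, assuming $\alpha$ regular and $\langle\alpha',\beta'\rangle\equiv0$, I would set $s=\widehat s(t)=\int_{t_0}^{t}\lambda$ and verify directly that $\langle d\gamma/ds,\,d\gamma/ds\rangle=\langle\gamma',\gamma'\rangle/\lambda^2=1$, so that $\gamma$ is parametrized by arc-length.

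I expect the main point — rather than any hard computation — to be conceptual: one must pin down what ``reparametrization by arc-length'' should mean in $\d_1^2$ so that the argument is not circular, justifying, as L\'opez does, that a dual-valued $\widehat s$ whose dual part is a function of its real part does not yield an admissible parameter, which forces the vanishing of that dual part. A minor technical point to address is that the dual square root formula requires the real part of $\langle\gamma',\gamma'\rangle$ to be nonzero; this is again guaranteed exactly by the regularity of $\alpha$, which is why regularity of $\alpha$, and not merely of $\gamma$, enters the statement. One should also check that in the timelike case $|\gamma'|=\sqrt{-\langle\gamma',\gamma'\rangle}$ and that all signs work out identically, so that no separate treatment is really needed.
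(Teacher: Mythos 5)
Your proposal is correct and follows essentially the same route as the paper, which gives no independent argument but simply declares the proof ``analogous to the Euclidean case'' and cites \cite[Proposition 2.1]{lo2}; your computation of $\langle\gamma',\gamma'\rangle$, the dual square root, and the requirement that the dual part of $\widehat s$ vanish while its real part is a diffeomorphism is exactly that adaptation. The only caveat — which affects the paper's own statement as much as your write-up — is that in $\l^2$ regularity of $\alpha$ ($\alpha'\neq 0$) does not by itself give $\langle\alpha',\alpha'\rangle\neq 0$, so the argument tacitly assumes $\alpha$ is spacelike or timelike (no lightlike points), as you implicitly do by splitting into those two cases.
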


We also remark that a timelike curve is always regular. Hence, when $\gamma\subset \d_1^2$ is timelike, the regularity condition in Proposition \ref{larc} is directly satisfied.

Assume that $\gamma(s)=\alpha(s)+\varepsilon \beta(s)$, $s\in I\subset \r$, is parametrized by arc-length and $\beta' \neq 0$ on $I$. By Proposition \ref{larc}, we have 
$$
\langle \alpha'(s),\alpha'(s)\rangle=\delta \in \{-1,1\}, \quad \langle \alpha'(s), \beta'(s)\rangle =0, \quad s\in I,
$$ 
implying that $\beta(s)\subset\l^2$ is spacelike (resp. timelike) whenever $\alpha(s)\subset\l^2$ is timelike (resp. spacelike).


In the following result, we give a description of straight lines in $\d^2_1$.

\begin{proposition}\label{stra}
Let $\gamma(s)=\alpha(s)+\varepsilon \beta(s)$, $s\in I\subset \r$, be an arc-length parametrized curve with $\beta' \neq 0$ on $I$. If $\gamma \subset \d_1^2$ is a straight line, that is $\gamma''=0$, then $\alpha\in \l^2$ is a spacelike (resp. timelike) straight line and $\beta\in \l^2$ is a timelike (resp. spacelike) straight line. 
\end{proposition}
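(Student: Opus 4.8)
The plan is to split the hypothesis $\gamma''=0$ into its real and dual parts and then read off the causal characters from the arc-length normalization together with the orthogonality relations recorded in Section 2.

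First I would write $\gamma''=\alpha''+\varepsilon\beta''$, so that $\gamma''=0$ on $I$ is equivalent to $\alpha''=0$ and $\beta''=0$ on $I$. Integrating twice, $\alpha(s)=p+s\vec{u}$ and $\beta(s)=q+s\vec{w}$ for fixed points $p,q\in\l^2$ and fixed vectors $\vec{u}=\alpha'$, $\vec{w}=\beta'$; in particular both $\alpha$ and $\beta$ are straight lines, and $\vec{w}\neq 0$ by the standing hypothesis $\beta'\neq 0$. Next I would invoke Proposition \ref{larc} (exactly as in the paragraph preceding the statement): since $\gamma$ is parametrized by arc-length, $\langle\alpha',\alpha'\rangle=\delta\in\{-1,1\}$ and $\langle\alpha',\beta'\rangle=0$ on $I$. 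Hence $\vec{u}$ is a constant vector with $\langle\vec{u},\vec{u}\rangle=\delta$, so $\vec{u}$ is spacelike when $\delta=1$ and timelike when $\delta=-1$; equivalently, $\alpha\subset\l^2$ is a spacelike (resp. timelike) straight line.

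It remains to determine the causal character of $\vec{w}$. We have $\langle\vec{u},\vec{w}\rangle=0$ with $\vec{u}$ non-null and $\vec{w}\neq 0$, so by the orthogonality facts recalled in Section 2 — a nonzero vector Lorentz-orthogonal to a spacelike (resp. timelike) vector is timelike (resp. spacelike) — the vector $\vec{w}$ is timelike when $\vec{u}$ is spacelike and spacelike when $\vec{u}$ is timelike. Therefore $\beta\subset\l^2$ is a timelike (resp. spacelike) straight line, which is the assertion. The argument is essentially bookkeeping; the only point that genuinely uses the geometry of $\l^2$ is this last step, and there the crucial input is $\vec{w}\neq 0$: a lightlike vector can be orthogonal to $\vec{u}$ only if $\vec{u}$ is itself lightlike, which is excluded by $\langle\vec{u},\vec{u}\rangle=\pm 1$, so the hypothesis $\beta'\neq 0$ is precisely what prevents $\beta$ from degenerating to a lightlike line.
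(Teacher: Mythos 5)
Your proof is correct and follows essentially the same route as the paper, whose proof is just the one-line remark that the result follows from the arc-length condition $\langle\alpha',\beta'\rangle=0$; you simply fill in the bookkeeping (splitting $\gamma''=0$ into real and dual parts, and invoking the orthogonality facts of $\l^2$ recalled in Section 2). Your observation that $\beta'\neq 0$ is exactly what excludes the degenerate lightlike case is a worthwhile point that the paper leaves implicit.
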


\begin{proof}
The result follows immediately from the arc-length condition $\langle \alpha', \beta'\rangle =0$.
\end{proof}

\begin{remark}
The converse of Proposition \ref{stra} is not always true. Let $\gamma(s)=\alpha(s)+\varepsilon \beta(s)$, $s\in I\subset \r$, be an arc-length parametrized curve so that $|\gamma'(s)|=|\alpha'(s)|=1$. If $\alpha\subset \l^2$ is a straight line, then $\alpha(s)=p+s\vec{v}$, where $\vec{v}\in \l^2$ is a unit vector and $p\in\l^2$ is a point. By Proposition \ref{larc} we have $\langle \vec{v}, \beta'\rangle =0$, implying that $\beta'$ is parallel to a direction orthogonal to $\vec{v}$. Hence, there is a vector $\vec{w}\in \l^2$ orthogonal to $\vec{v}$ such that $\beta(s)=f(s)\vec{w}+q$, for some smooth function $f$ on $I$ and some point $q\in\l^2$. Then the curve
$$
\gamma(s)=p+\varepsilon q + s\vec{v}+\varepsilon f(s)\vec{w}
$$
describes a straight line (i.e. a $1$-dimensional affine subspace) in $\d^2_1$ if and only if $f''=0$ on $I$. Consequently, although $\alpha$ and $\beta$ are straight lines, $\gamma$ is not necessarily a straight line.
\end{remark}

From now on, we assume that $\alpha''\neq 0$ in the arc-length parametrization of $\gamma \subset \d_1^2$ in order to introduce the notion of curvature.

\begin{definition}\label{lcurv}
Let $\gamma(s)=\alpha(s)+\varepsilon \beta(s)$, $s\in I\subset \r$, be an arc-length parametrized curve. We call $T_\gamma=\gamma'$ the unit tangent vector of $\gamma $. If $\alpha''(s) \neq 0$ for all $s\in I$, then we call $$\kappa_\gamma=|\gamma''|=\sqrt{|\langle\gamma'',\gamma''\rangle|}$$ the curvature of $\gamma$ and $$N_\gamma=\frac{1}{\kappa_\gamma}\gamma''$$ the unit normal vector of $\gamma $. We also say that $\gamma$ has vanishing curvature if $\gamma''(s) = 0$ for all $s\in I$. 
\end{definition}

\begin{remark}\label{lrcurv}
In Definition \ref{lcurv}, the curvature $\kappa_\gamma$ of the curve $\gamma$ is not well defined when $\alpha''=0$ but $\gamma''\neq 0$. This issue arises because the norm of a dual vector cannot be consistently defined in this case.
\end{remark}

Let $\delta \in \{-1,1\}$. It is direct to see that $\langle\gamma',\gamma'\rangle =\delta$ whenever $\langle\alpha',\alpha'\rangle =\delta$. Moreover, for $\alpha\in \l^2$ since $\alpha'$ is orthogonal to $\alpha''$ we have $\langle\alpha'',\alpha''\rangle=-\delta |\langle\alpha'',\alpha''\rangle|$. Then, a direct computation shows that $\langle\gamma'',\gamma''\rangle =-\delta|\langle\gamma'',\gamma''\rangle|$, that is, $\langle N_\gamma,N_\gamma\rangle=-\delta$. Consequently, we have the Frenet formulas
$$
T'_\gamma=\kappa_\gamma N_\gamma, \quad N'_\gamma=\kappa_\gamma T_\gamma.
$$

Let $\{T_\alpha,N_\alpha\}$ be the Frenet frame along $\alpha$ and $\kappa_\alpha$ denote the curvature of $\alpha.$ The following result establishes explicit relations between the Frenet apparatus of $\gamma$ and that of its real part $\alpha$.

\begin{proposition}\label{ltnk}
Let $\gamma(s)=\alpha(s)+\varepsilon \beta(s)$, $s\in I\subset \r$, be an arc-length parametrized curve. Then we have
\begin{eqnarray*}
T_\gamma&=&T_\alpha+\varepsilon \beta',\\[2mm]
N_\gamma&=&N_\alpha+\varepsilon \frac{\delta\langle \beta'',T_\alpha\rangle}{\kappa_\alpha}  T_\alpha  ,\\[2mm]
\kappa_\gamma&=&\kappa_\alpha+\varepsilon(-\delta\langle \beta'',N_\alpha\rangle).
\end{eqnarray*}
\end{proposition}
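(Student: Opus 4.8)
The plan is to compute each quantity directly from its definition, using the arc-length conditions $\langle\alpha',\alpha'\rangle=\delta$ and $\langle\alpha',\beta'\rangle=0$ (Proposition \ref{larc}) together with the rule $\varepsilon^2=0$, and then to simplify by decomposing dual vectors in the Frenet frame $\{T_\alpha,N_\alpha\}$ of $\alpha$. First I would observe that $T_\gamma=\gamma'=\alpha'+\varepsilon\beta'=T_\alpha+\varepsilon\beta'$ is immediate from the definition, with no computation needed. This also confirms $\langle T_\gamma,T_\gamma\rangle=\langle\alpha',\alpha'\rangle+2\varepsilon\langle\alpha',\beta'\rangle=\delta$, consistent with what was already noted before the statement.

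Next I would handle $\kappa_\gamma$. Writing $\gamma''=\alpha''+\varepsilon\beta''$ and using $\langle\alpha'',\alpha''\rangle=-\delta\kappa_\alpha^2$ (since $\alpha'\perp\alpha''$ and $\kappa_\alpha=|\alpha''|$), I get $\langle\gamma'',\gamma''\rangle=-\delta\kappa_\alpha^2+2\varepsilon\langle\alpha'',\beta''\rangle$. Decomposing $\alpha''=\kappa_\alpha N_\alpha$ in the Frenet frame and using $\langle N_\alpha,N_\alpha\rangle=-\delta$, one has $\langle\alpha'',\beta''\rangle=\kappa_\alpha\langle N_\alpha,\beta''\rangle$, so $\langle\gamma'',\gamma''\rangle=-\delta\kappa_\alpha^2\bigl(1-\tfrac{2\varepsilon}{\delta\kappa_\alpha}\langle N_\alpha,\beta''\rangle\bigr)$. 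Since $\delta^2=1$, taking $|\langle\gamma'',\gamma''\rangle|$ replaces the factor $-\delta$ by $+1$, and the square root of a dual number of the form $a^2(1+\varepsilon b)$ is $|a|(1+\tfrac{\varepsilon}{2}b)$; this yields $\kappa_\gamma=\kappa_\alpha-\varepsilon\,\delta\langle\beta'',N_\alpha\rangle$, as claimed. A small point to be careful about: one should check the sign so that the real part comes out as $\kappa_\alpha>0$, which fixes the branch of the dual square root.

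For $N_\gamma=\kappa_\gamma^{-1}\gamma''$ I would first invert the dual scalar: $\kappa_\gamma^{-1}=\kappa_\alpha^{-1}\bigl(1+\tfrac{\varepsilon\delta}{\kappa_\alpha}\langle\beta'',N_\alpha\rangle\bigr)$. Multiplying by $\gamma''=\kappa_\alpha N_\alpha+\varepsilon\beta''$ and keeping only terms up to first order in $\varepsilon$ gives $N_\gamma=N_\alpha+\varepsilon\bigl(\tfrac{1}{\kappa_\alpha}\beta''+\tfrac{\delta}{\kappa_\alpha}\langle\beta'',N_\alpha\rangle N_\alpha\bigr)$. To match the stated formula I would then decompose $\beta''$ in the frame $\{T_\alpha,N_\alpha\}$: since $\langle T_\alpha,T_\alpha\rangle=\delta$ and $\langle N_\alpha,N_\alpha\rangle=-\delta$, we have $\beta''=\delta\langle\beta'',T_\alpha\rangle T_\alpha-\delta\langle\beta'',N_\alpha\rangle N_\alpha$. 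Substituting, the $N_\alpha$-components cancel, leaving $N_\gamma=N_\alpha+\varepsilon\tfrac{\delta\langle\beta'',T_\alpha\rangle}{\kappa_\alpha}T_\alpha$, exactly the asserted expression.

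The only genuinely delicate step is the treatment of the dual square root and the dual reciprocal, i.e.\ justifying the expansions $\sqrt{a^2(1+\varepsilon b)}=|a|(1+\tfrac{\varepsilon}{2}b)$ and $(a+\varepsilon b)^{-1}=a^{-1}-\varepsilon a^{-2}b$ for $a\neq 0$, and choosing signs consistently with the convention $\kappa_\gamma>0$ in its real part; everything else is bookkeeping with $\varepsilon^2=0$ and the orthogonality relations. I would present these two algebraic facts once at the start (they are standard for dual numbers, cf.\ \cite{vel}) and then carry out the three computations in the order $T_\gamma$, $\kappa_\gamma$, $N_\gamma$, since $N_\gamma$ depends on the already-computed $\kappa_\gamma$.
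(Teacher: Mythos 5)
Your proposal is correct and follows essentially the same route as the paper: the first identity is immediate, $\kappa_\gamma$ is obtained from the first-order dual expansion of the square root of $|\langle\gamma'',\gamma''\rangle|$ using $\langle\alpha'',\beta''\rangle=\kappa_\alpha\langle N_\alpha,\beta''\rangle$, and $N_\gamma$ is obtained by dual division together with the decomposition $\beta''=\delta\langle\beta'',T_\alpha\rangle T_\alpha-\delta\langle\beta'',N_\alpha\rangle N_\alpha$ in the Frenet frame, which is exactly the paper's identity \eqref{expbe}. The only cosmetic difference is that you write out the dual reciprocal and square-root expansions explicitly where the paper invokes them implicitly.
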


\begin{proof}
The first equality follows directly from the definition of $T_\gamma$. Let $\langle\alpha',\alpha'\rangle =\delta$. Then $\langle\alpha'',\alpha''\rangle=-\delta |\langle\alpha'',\alpha''\rangle|$. Hence,
$$
\kappa_\gamma=\sqrt{|\langle \gamma'',\gamma''\rangle|}=\sqrt{|\langle \alpha'',\alpha''\rangle|+\varepsilon(-2\delta\langle \alpha'',\beta''\rangle)}=\sqrt{|\langle \alpha'',\alpha''\rangle|}+\varepsilon\frac{-\delta\langle \alpha'',\beta''\rangle}{\sqrt{|\langle \alpha'',\alpha''\rangle|}},
$$
which proves the third equality of the proposition. On the other hand, the decomposition of $\beta''$ with respect to the frame $\{T_\alpha,N_\alpha\}$ is given by
\begin{equation} \label{expbe}
\beta''+\delta \langle \beta'',N_\alpha\rangle N_\alpha=\delta \langle \beta'',T_\alpha\rangle T_\alpha.
\end{equation}
Moreover, we have
$$
N_\gamma=\frac{1}{\kappa_\gamma}(\alpha''+\varepsilon \beta'').
$$
Using the division rule for dual numbers together with \eqref{expbe} in the above expression, we obtain the second equality of the proposition. 
\end{proof}

With the third equality of Proposition \ref{ltnk}, we can find all curves in $\d_1^2$ satisfying $\kappa_\gamma=\kappa_\alpha$.

\begin{corollary}\label{k=k}
Let $\gamma(s)=\alpha(s)+\varepsilon \beta(s)$, $s\in I\subset \r$, be an arc-length parametrized curve with $\beta'\neq 0$ on $I$. Let also $\kappa_\gamma$ and $\kappa_\alpha$ denote the curvatures of $\gamma$ and its real part, respectively. The curve $\gamma$ satisfies $\kappa_\gamma=\kappa_\alpha \neq 0$ if and only if it admits one of the following parametrizations: 
\begin{enumerate}
\item[(a)] If $\gamma$ is spacelike, then
$$
\gamma(s)=\left ({\textstyle\int^s}\cosh \theta(u)du, {\textstyle\int^s}\sinh \theta(u)du \right )+\varepsilon m\left ({\textstyle\int^s}\sinh \theta(u)du, {\textstyle\int^s}\cosh \theta(u)du \right ).
$$
\item[(b)] If $\gamma$ is timelike, then
$$
\gamma(s)=\left ({\textstyle\int^s}\sinh \theta(u)du, {\textstyle\int^s}\cosh \theta(u)du \right )+ \varepsilon m\left ({\textstyle\int^s}\cosh \theta(u)du, {\textstyle\int^s}\sinh \theta(u)du \right ),
$$
where $m\in \r$, $m>0$.
\end{enumerate} 
\end{corollary}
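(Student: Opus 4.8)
The plan is to read the condition $\kappa_\gamma=\kappa_\alpha$ directly off the third identity of Proposition~\ref{ltnk}. Writing $\delta=\langle\alpha',\alpha'\rangle\in\{-1,1\}$, that identity reads $\kappa_\gamma=\kappa_\alpha+\varepsilon(-\delta\langle\beta'',N_\alpha\rangle)$, so comparing dual parts shows that $\kappa_\gamma=\kappa_\alpha$ is equivalent to $\langle\beta'',N_\alpha\rangle=0$. Note that the hypothesis $\kappa_\gamma=\kappa_\alpha\neq 0$ forces $\kappa_\alpha\neq 0$, hence $\alpha''\neq 0$, so the Frenet frame $\{T_\alpha,N_\alpha\}$ and the curvature $\kappa_\alpha$ of $\alpha$ are available on all of $I$.

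First I would use the arc-length condition. By Proposition~\ref{larc} we have $\langle\alpha',\beta'\rangle=0$, so $\beta'$ lies in the orthogonal complement of the non-degenerate vector $T_\alpha=\alpha'$ in $\l^2$; since that complement is the one-dimensional subspace spanned by $N_\alpha$, we may write $\beta'=\lambda N_\alpha$ for a smooth function $\lambda$ on $I$, and $\lambda$ is nowhere zero because $\beta'\neq 0$. Differentiating and using the Frenet equations $T_\alpha'=\kappa_\alpha N_\alpha$, $N_\alpha'=\kappa_\alpha T_\alpha$ gives $\beta''=\lambda'N_\alpha+\lambda\kappa_\alpha T_\alpha$, and taking the inner product with $N_\alpha$ (with $\langle N_\alpha,N_\alpha\rangle=-\delta$ and $\langle T_\alpha,N_\alpha\rangle=0$) yields $\langle\beta'',N_\alpha\rangle=-\delta\lambda'$. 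Hence, using $\delta^2=1$, the curvature is $\kappa_\gamma=\kappa_\alpha+\varepsilon\lambda'$, so $\kappa_\gamma=\kappa_\alpha$ holds if and only if $\lambda$ is a nonzero real constant; call it $m$, and take $m>0$ without loss of generality.

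It then remains to integrate $\beta'=mN_\alpha$ in the two causal cases, using the normal forms recalled in the preliminaries. If $\gamma$ is spacelike then $\delta=1$ and, up to an isometry of $\l^2$, $\alpha(s)=\left({\textstyle\int^s}\cosh\theta(u)\,du,{\textstyle\int^s}\sinh\theta(u)\,du\right)$ with $\theta={\textstyle\int^s}\kappa_\alpha$; thus $T_\alpha=(\cosh\theta,\sinh\theta)$ and the unit normal compatible with the Frenet sign convention ($\langle N_\alpha,N_\alpha\rangle=-1$ and $T_\alpha'=\kappa_\alpha N_\alpha$) is $N_\alpha=(\sinh\theta,\cosh\theta)$. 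Integrating $\beta'=mN_\alpha$ and absorbing the constant of integration into a translation of $\l^2$ gives precisely the parametrization in item~(a). The timelike case $\delta=-1$ is entirely parallel, with $\cosh$ and $\sinh$ interchanged in both $\alpha$ and $N_\alpha$, and yields item~(b). For the converse, substituting the displayed curves back in, $\gamma'=T_\alpha+\varepsilon m N_\alpha$ gives $\langle\gamma',\gamma'\rangle=\delta$, one has $\beta'=mN_\alpha\neq 0$, and Proposition~\ref{ltnk} with $\lambda\equiv m$ constant gives $\kappa_\gamma=\kappa_\alpha\neq 0$, so all hypotheses hold.

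I do not expect a genuine obstacle here; the argument is short. The only points demanding a little care are the passage $\beta'=\lambda N_\alpha$, which relies on the fact that in $\l^2$ the orthogonal complement of a non-degenerate vector is one-dimensional and spanned by $N_\alpha$, and the careful bookkeeping of the Lorentzian signs $\langle N_\alpha,N_\alpha\rangle=-\delta$ and $\delta^2=1$ when simplifying the dual part of $\kappa_\gamma$.
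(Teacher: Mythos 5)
Your proof is correct, and it takes a mildly but genuinely different route from the paper's. The paper proves the spacelike case in coordinates: it writes $\beta=(x,y)$, translates the arc-length condition and $\langle\beta'',N_\alpha\rangle=0$ into the two scalar equations $x'\cosh\theta-y'\sinh\theta=0$ and $x''\sinh\theta-y''\cosh\theta=0$, and then eliminates $x$ to obtain and solve the first-order ODE $y''=\theta'\tanh\theta\,y'$, yielding $x'=m\sinh\theta$, $y'=m\cosh\theta$. You instead decompose $\beta'$ in the Frenet frame of $\alpha$, writing $\beta'=\lambda N_\alpha$ (justified because the orthogonal complement of the non-degenerate vector $T_\alpha$ in $\l^2$ is spanned by $N_\alpha$), so that the Frenet equations give $\langle\beta'',N_\alpha\rangle=-\delta\lambda'$ and the curvature condition collapses to $\lambda'=0$. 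This is exactly the strategy the paper itself uses later in the proof of Theorem \ref{lclass} (there $\beta'=fN_\alpha$ and $f'=m$), so your argument unifies the two results, treats the spacelike and timelike cases simultaneously through the sign $\delta$, and avoids both the $\tanh/\sech$ manipulations and the paper's superfluous digression about $x,y$ not being linear. The only point you share with the paper rather than improve on is the normalization $m>0$: in both arguments the integration constant a priori has arbitrary sign, and the reduction to $m>0$ deserves a word (e.g.\ via an isometry of $\l^2$ reversing $N_\alpha$), but this is a cosmetic matter common to both proofs.
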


\begin{proof}
We do the proof for the case $\gamma$ is spacelike. The proof for the other case is analogous. Assume that $\gamma$ is spacelike; then its real part $\alpha$ is also spacelike. Hence, we choose the following paramterization of $\alpha$:
$$
\alpha(s)=\left ({\textstyle\int^s}\cosh \theta(u)du, {\textstyle\int^s}\sinh \theta(u)du \right ), \quad \kappa_\alpha(s)=\theta'(s).
$$
Set $\beta(s)=(x(s),y(s))$, where $x$ and $y$ are smooth functions on $I$. Using the arc-length condition $\langle \alpha',\beta'\rangle=0$, we have
\begin{equation}\label{lxy00}
x'\cosh \theta-y'\sinh \theta=0.
\end{equation}
Assume $x$ and $y$ are linear functions. Since the functions $\{\sinh \theta,\cosh \theta\}$ are linearly independent, it follows that $x'=0$ and $y'=0$. Consequently, $x$ and $y$ must be non-linear functions. Moreover, by the hypothesis $\kappa_\gamma=\kappa_\alpha$, we have $\langle \beta'',N_\alpha\rangle=0$, or equivalently,
\begin{equation}\label{lxy01}
x''\sinh \theta-y''\cosh \theta=0.
\end{equation}
From \eqref{lxy00}, we have $x'=\tanh \theta y'$ and $x''=\theta'\sech^2 \theta y' +\tanh \theta y''$. Substituting these expressions into \eqref{lxy01}, we derive $y''=\theta'\tanh \theta y'$. Solving this differential equation yields
$$
x'(s)=m\sinh \theta(s), \quad y'(s)=m \cosh \theta(s),
$$
where $m>0$ is a real constant. Integrating these identities completes the proof.
\end{proof}

In view of Proposition \ref{ltnk}, the curvature $\kappa_\gamma$ is purely real-valued if and only if $\langle \beta'',N_\alpha\rangle = 0$. In particular, when $\kappa_\gamma\neq 0$, this is equivalent to $\kappa_\gamma=\kappa_\alpha\neq 0$ and Corollary \ref{k=k} provides a complete description of such curves.

Moreover, in the spacelike case of Corollary \ref{k=k} the real part $\alpha$ is a spacelike curve whereas the dual part $\beta$ is timelike, and the roles are reversed in the timelike case. Since isometries of $\mathbb{L}^2$ preserve the causal character, there is no isometry of $\l^2$ mapping the real part $\alpha$ to the dual part $\beta$, or conversely.

As a concluding result of this section, we provide a complete classification of curves in $\d_1^2$ with nonzero constant curvature.

\begin{theorem} \label{lclass}
Let $\gamma(s)=\alpha(s)+\varepsilon \beta(s)$, $s\in I\subset \r$, be a spacelike curve in $\d^2_1$ parametrized by arc-length. The curve $\gamma$ has constant curvature $r_\gamma=r_\alpha+\varepsilon m$, where $r_\alpha,m\in \r$ and $r_\alpha\neq 0$, if and only if, up to the isometries of $\l^2$, the real part $\alpha$ is given by
$$
\alpha(s)=r_\alpha\left ( \sinh \frac{s}{r_\alpha} ,\cosh \frac{s}{r_\alpha}\right), 
$$
and the dual part satisfies
\begin{eqnarray*}
\beta(s)&=&\Big(-m r_\alpha^2\sinh\frac{s}{r_\alpha}
+ (ms+n) r_\alpha \cosh\frac{s}{r_\alpha},\\
&&-m r_\alpha^2\cosh\frac{s}{r_\alpha}+ (ms+n) r_\alpha \sinh\frac{s}{r_\alpha}\Big)
+\beta_0,
\end{eqnarray*}
where $\beta_0 \in \l^2$ is a constant vector.
\end{theorem}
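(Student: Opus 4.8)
The plan is to use Proposition~\ref{ltnk} to reduce the constancy of $\kappa_\gamma$ to two independent scalar conditions, solve the one governing the dual part as a linear ODE, and integrate. Since $\gamma$ is spacelike we have $\delta=1$, so Proposition~\ref{ltnk} gives $\kappa_\gamma=\kappa_\alpha+\varepsilon\,(-\langle\beta'',N_\alpha\rangle)$, and therefore the requirement $\kappa_\gamma\equiv r_\alpha+\varepsilon m$ is equivalent to the pair
$$
\kappa_\alpha\equiv r_\alpha, \qquad \langle\beta'',N_\alpha\rangle\equiv -m .
$$
The first equation, together with the description of arc-length parametrized spacelike curves in $\l^2$ recalled in the Preliminaries (taking $\theta(s)=\int^s\kappa_\alpha(u)\,du=r_\alpha s$ after an isometry that absorbs the integration constant), determines the real part $\alpha$, up to isometries of $\l^2$, as the hyperbola in the statement; along it $T_\alpha=\alpha'$ and the unit timelike normal $N_\alpha$ are explicit vectors built from $\cosh(r_\alpha s)$ and $\sinh(r_\alpha s)$.

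Next I would set $\beta(s)=(x(s),y(s))$ and turn the remaining data into differential equations. The arc-length condition of Proposition~\ref{larc}, $\langle\alpha',\beta'\rangle=0$, becomes a first-order relation of the shape $x'\cosh(r_\alpha s)-y'\sinh(r_\alpha s)=0$, while $\langle\beta'',N_\alpha\rangle=-m$ becomes the second-order relation $x''\sinh(r_\alpha s)-y''\cosh(r_\alpha s)=-m$. Using the first relation to eliminate $x$ and simplifying with $\cosh^2-\sinh^2=1$, the second collapses to a single linear first-order ODE for $w:=y'$,
$$
w'-r_\alpha\tanh(r_\alpha s)\,w=m\cosh(r_\alpha s),
$$
which is solved with the integrating factor $\sech(r_\alpha s)$, giving $y'=(ms+n)\cosh(r_\alpha s)$ and hence $x'=(ms+n)\sinh(r_\alpha s)$ for a real constant $n$. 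Two integrations by parts then produce the closed form for $\beta$ displayed in the statement, the leftover constants of integration being collected into $\beta_0$. For the converse one starts from the displayed $\alpha$ and $\beta$, verifies $\langle\alpha',\alpha'\rangle=1$ and $\langle\alpha',\beta'\rangle=0$ (so $\gamma$ is a spacelike arc-length parametrized curve by Proposition~\ref{larc}), computes $\beta''$ and $N_\alpha$, reads off $\langle\beta'',N_\alpha\rangle=-m$, and invokes Proposition~\ref{ltnk} to conclude $\kappa_\gamma=r_\alpha+\varepsilon m$.

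I do not expect a conceptual obstacle: Proposition~\ref{ltnk} already decouples the problem into the scalar equation $\kappa_\alpha=r_\alpha$, which is handled in the Preliminaries, and a single linear ODE for the dual part. The real work is the elimination that brings the $(x,y)$ system down to the scalar equation above, together with the bookkeeping of the integration constants and of the normalizations performed up to isometries of $\l^2$; one should also record that the hypothesis $r_\alpha\neq 0$ is precisely what ensures $\alpha''\neq 0$, so that the curvature of Definition~\ref{lcurv} is well defined along $\gamma$.
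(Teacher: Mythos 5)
Your argument is correct and follows essentially the same route as the paper: Proposition~\ref{ltnk} splits the hypothesis into $\kappa_\alpha=r_\alpha$ and $\langle\beta'',N_\alpha\rangle=-m$, the first condition fixes $\alpha$ up to isometries of $\l^2$, and the second is integrated to produce $\beta$. The only cosmetic difference is that the paper writes $\beta'=fN_\alpha$ directly from $\langle\beta',T_\alpha\rangle=0$ and reads off $f'=m$ via the Frenet formulas, rather than eliminating $x$ from the coordinate system and applying an integrating factor; both computations give $\beta'=(ms+n)N_\alpha$ and hence the stated expression.
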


\begin{proof}
By hypothesis, the curvature satisfies $\kappa_\gamma = r_\gamma=r_\alpha+\varepsilon m$, where $r_\alpha,m\in \r$ and $r_\alpha\neq 0$. Using the third equality of Proposition \ref{ltnk}, we obtain $\kappa_\alpha=r_\alpha$ and 
\begin{equation}\label{betanm}
\langle \beta'',N_\alpha\rangle=-m.
\end{equation}
Since $\alpha$ is spacelike, up to isometries of $\l^2$, we write
$$
\alpha(s)=r_\alpha\left ( \sinh \frac{s}{r_\alpha}  ,\cosh \frac{s}{r_\alpha} \right).
$$
From the arc-length condition $\langle \beta', T_\alpha \rangle = 0$, there exists a smooth function $f(s)$ on $I$ such that $\beta' = f N_\alpha$. Differentiating, we obtain $\beta'' = f \kappa_\alpha T_\alpha + f' N_\alpha$.
Substituting this expression into \eqref{betanm} yields $f' = m$, or equivalently, $f(s)=ms+n$, $n\in \r$. Since 
$$
N_\alpha=\left ( \sinh \frac{s}{r_\alpha}  ,\cosh \frac{s}{r_\alpha} \right),
$$
integrating the relation $\beta'=fN_\alpha$, we obtain the formula for $\beta$ in the statement of the theorem. The converse of the statement follows directly.
\end{proof}

\begin{remark}
Following the proof of Theorem \ref{lclass}, when $\gamma(s)=\alpha(s)+\varepsilon \beta(s)$ is timelike, we obtain the following parametrizations for the real and dual parts
$$
\alpha(s)=r_\alpha\left ( \cosh \frac{s}{r_\alpha}  ,\sinh \frac{s}{r_\alpha} \right), 
$$
and
\begin{eqnarray*}
\beta(s)&=&\Big( - m r_\alpha^2\cosh\frac{s}{r_\alpha}+(ms+n) r_\alpha\sinh\frac{s}{r_\alpha},\\
&& - m r_\alpha^2\sinh\frac{s}{r_\alpha} +(ms+n) r_\alpha \cosh\frac{s}{r_\alpha}\Big)
+\beta_0, \quad \beta_0 \in \l^2.
\end{eqnarray*}
\end{remark}

\subsection*{Ethics declarations}

The authors have no conflict of interest to declare that are relevant to the content of this article. No data were used to support this study.

\subsection*{Acknowledgment}

The authors would like to express their sincere appreciation to Professor Rafael L\'opez for his valuable suggestion to investigate the differential geometry of curves in the dual spaces. The second author is supported by TÜBİTAK.


\end{document}